\documentclass[11pt]{article} 
\usepackage{amsfonts,amsmath,latexsym,amssymb,mathrsfs,amsthm,comment}
\usepackage{graphicx}
\usepackage{xcolor}
\usepackage{booktabs}

\evensidemargin0cm
\oddsidemargin0cm
\textwidth16cm
\textheight22.8cm
\topmargin-1.7cm



\let\OLDthebibliography\thebibliography
\renewcommand\thebibliography[1]{
  \OLDthebibliography{#1}
  \setlength{\parskip}{1pt}
  \setlength{\itemsep}{0pt plus 0.0ex}
}


%

\def\numberlikeadb{\global\def\theequation{\thesection.\arabic{equation}}}
\numberlikeadb
\newtheorem{theorem}{Theorem}[section]

\newtheorem{corollary}[theorem]{Corollary}

\newtheorem{remark}[theorem]{Remark}
\newtheorem{example}[theorem]{Example}

\usepackage{lscape}
\usepackage{caption}
\usepackage{multirow}
\allowdisplaybreaks
\begin{document}

\title{On the characteristic function of the asymmetric Student's $t$-distribution and an integral involving the sine function}
\author{Robert E. Gaunt\footnote{Department of Mathematics, The University of Manchester, Oxford Road, Manchester M13 9PL, UK, robert.gaunt@manchester.ac.uk}}

\date{} 
\maketitle

\vspace{-5mm}

\begin{abstract} We obtain a new closed-form formula for the characteristic function of the asymmetric Student's $t$-distribution. As part of our analysis, we derive a new closed-form formula for the integral $\int_0^\infty \sin(ax)/(b^2+x^2)^n\,\mathrm{d}x$, for $a,b>0$, $n\in\mathbb{Z}^+$, expressed in terms of the exponential integral function. As a consequence of our integral formula, we deduce a closed-form formula for the limit $\lim_{\nu\rightarrow n} \{I_{\nu-1/2}(x)-\mathbf{L}_{1/2-\nu}(x)\}/\sin(\pi\nu)$, for $n\in\mathbb{Z}^+$, $x>0$.

\end{abstract}

\noindent{{\bf{Keywords:}}} Asymmetric Student's $t$-distribution; characteristic function; integral;
modified Bessel function; modified Struve function

\noindent{{{\bf{AMS 2020 Subject Classification:}}} Primary 26A33; 60E05; 62E15; Secondary 33B10; 33C10; 33C20}



\section{Introduction}

In this paper, we consider the asymmetric Student's $t$-distribution (AST distribution) that was introduced by 
\cite{zg10}, and represents a generalisation of the skewed version of Student's $t$-distribution given by \cite{fern}. 
For $0<\alpha<1$ and $\nu_1,\nu_2>0$, the probability density function (PDF) of the AST distribution is given by
\begin{align}\label{pdf}
f(x)=\begin{cases}
\displaystyle \frac{\alpha}{\alpha^*}K(\nu_1)\bigg[1+\frac{1}{\nu_1}\Big(\frac{x}{2\alpha^*}\Big)^2\bigg]^{-(\nu_1+1)/2}, &\: x\leq0, \\[10pt]
\displaystyle \frac{1-\alpha}{1-\alpha^*}K(\nu_2)\bigg[1+\frac{1}{\nu_2}\Big(\frac{x}{2(1-\alpha^*)}\Big)^2\bigg]^{-(\nu_2+1)/2}, & \: x>0,
\end{cases}    
\end{align}
where $\alpha^*=\alpha K(\nu_1)/\{\alpha K(\nu_1)+(1-\alpha)K(\nu_2)\}$ and $K(\nu)=\Gamma((\nu+1)/2)/\{\sqrt{\pi\nu}\Gamma(\nu/2)\}$. Here, $\alpha$ is a skewness parameter and $\nu_1$ and $\nu_2$ are left and right tail parameters, respectively. Taking $\alpha=1/2$ and $\nu_1=\nu_2=\nu$ yields the PDF of the classical Student's $t$-distribution with $\nu$ degrees of freedom. The AST distribution has been shown to be a good fit to real financial data (see, for example, 
\cite{li,zg11}) and includes the popular generalisations of
\cite{hansen} and 
\cite{fern} for Student's $t$-distribution as special cases. Indeed, the AST distribution has become a popular alternative to the classical Student's $t$-distribution, as can been from the fact that the works \cite{fern} and \cite{zg10} have attracted around 2000 Google Scholar citations to date. 

In this paper, we will contribute to the distributional theory of the AST distribution by deriving new formulas for its characteristic function. In deriving these formulas, we obtain new closed-form expressions for the integral $\int_0^\infty \sin(ax)/(b^2+x^2)^n\,\mathrm{d}x$, $a,b>0$ in the case $n=2,3,4\ldots$.

A number of the most important distributional properties of the AST distribution were derived by 
\cite{zg10,zg11}. However, a formula for the characteristic function (CF), one of the most fundamental distributional properties, was not provided.
This problem was considered in the work
\cite{n13}, in which closed-form formulas for the CF were given in terms of generalized hypergeometric functions, as well as other special functions including the Bessel, modified Bessel, Struve and modified Struve functions. 
However, the formulas of \cite{n13} are erroneous (indeed, it is clear even just by inspection that the formulas have singularities at $\nu_1=1$ or $\nu_2=1$).


In this paper, we derive a new formula for the CF of the AST distribution (Theorem \ref{thm1}). Our formula takes a simpler form than any of those (incorrect) formulas given by 
\cite{n13}, as it is expressed solely in terms of modified Bessel functions, the modified Struve function of the first kind and the exponential integral, and does not involve generalized hypergeometric functions. Indeed, we believe that our formula is the simplest that can be provided for the AST distribution for general parameters $0<\alpha<1$ and $\nu_1,\nu_2>0$. Our general formula is also rather theoretically satisfying in that it is seen to very easily reduce to the known characteristic function for Student's $t$-distribution when $\alpha=1/2$ and $\nu_1=\nu_2=\nu$ (see Remark \ref{rem1}). 

In deriving our formula for the CF of the AST distribution, we will require a closed-form formula for the integral $\int_0^\infty \sin(ax)/(b^2+x^2)^\rho\,\mathrm{d}x$, for $a,b,\rho>0$. In the case $\rho\in(0,\infty)\setminus\mathbb{Z}^+$, formulas are already available in the literature. Here, $(0,\infty)\setminus\mathbb{Z}^+$ is notation for the set of all positive real numbers except the positive integers. Indeed, a rearrangement of formula 11.5.7 of \cite{olver} yields the formula
\begin{equation}\label{yy}
\int_0^\infty\frac{\sin(ax)}{(b^2+x^2)^\rho}\,\mathrm{d}x=\frac{\sqrt{\pi}}{2}\Gamma(1-\rho)\bigg(\frac{a}{2b}\bigg)^{\rho-1/2}\big\{I_{\rho-1/2}(ab)-\mathbf{L}_{1/2-\rho}(ab)\big\}, 
\end{equation}
which is valid for $a,b>0$ and $\rho\in(0,\infty)\setminus\mathbb{Z}^+$.
Here $I_\nu(x)$ and $\mathbf{L}_\nu(x)$ denote the modified Bessel function of the first kind and the modified Struve function of the first kind (see \cite[Chapters 10 and 11]{olver} for definitions and standard properties). Formula (\ref{yy}) is also given in formula 2.5.6(3) of \cite{integralbook}.
The formula of \cite{integralbook} is stated under the assumption that $\rho>0$; however, the RHS of (\ref{yy}) is undefined in the case $\rho\in\mathbb{Z}^+$. This is because $\Gamma(1-n)$ is undefined for $n\in\mathbb{Z}^+$, whilst $\mathbf{L}_{1/2-n}(x)=I_{n-1/2}(x)$ for $x>0$, $n\in\mathbb{Z}^+$ (see \cite[equation 11.4.4]{olver}).
In the case $\rho=1$, the following formula is available
\begin{align}\label{bom}
\int_0^\infty\frac{\sin(ax)}{b^2+x^2}\,\mathrm{d}x=\frac{1}{2b}\big[\mathrm{e}^{-ab}\mathrm{Ei}(ab)-\mathrm{e}^{ab}\mathrm{Ei}(-ab)\big], \quad a,b>0
\end{align}
(see \cite[equation 3.723(1)]{g07} and \cite[equation 2.5.6(5)]{integralbook}).
To the best of my knowledge there are no results in the literature for $\rho=2,3,4,\ldots$, and such formulas are also not known to \emph{Mathematica}. Here $\mathrm{Ei}$ denotes the exponential integral, which is given, for $x>0$, by $\mathrm{Ei}(x)=\int_{-x}^\infty \mathrm{e}^{-t}/t\,\mathrm{d}t=\int_{-\infty}^x\mathrm{e}^{-t}/t\,\mathrm{d}t$ (with the integrals understood as Cauchy principal value integrals) and $\mathrm{Ei}(-x)=-\int_x^\infty\mathrm{e}^{-t}/t\,\mathrm{d}t$ (see equations 6.2.5 and 6.2.6 of \cite{olver}). In Theorem \ref{thm2}, we fill in this gap in the literature by extending formula (\ref{bom}) to the more general integral $\int_0^\infty \sin(ax)/(b^2+x^2)^n\,\mathrm{d}x$ for all $n\in\mathbb{Z}^+$. Our integral formula complements the integral formula
\begin{equation}\label{bas}
\int_0^\infty\frac{\cos(ax)}{(x^2+b^2)^\rho}\,\mathrm{d}x=\frac{\sqrt{\pi}}{\Gamma(\rho)}\bigg(\frac{a}{2}\bigg)^{\rho-1/2}\frac{K_{\rho-1/2}(ab)}{b^{2\rho-1}}, \quad a,b,\rho>0,   
\end{equation}
which is a simple rearrangement of Basset's integral formula \cite[p.\ 172, Eq.\ (1)]{watson}; Basset earlier derived the formula for $\rho\in\{1/2,3/2,5/3,\ldots\}$ \cite[pp.\ 18--19]{basset}. As a by-product of our analysis, we obtain a closed-form formula for the limit $\lim_{\nu\rightarrow n} \{I_{\nu-1/2}(x)-\mathbf{L}_{1/2-\nu}(x)\}/\sin(\pi\nu)$, for $n\in\mathbb{Z}^+$, $x>0$ (see Corollary \ref{cor}). Our formula complements the well-known limit $\lim_{\nu\rightarrow n} \{I_{-\nu}(x)-I_{\nu}(x)\}/\sin(\pi\nu)=(2/\pi)K_n(x)$, for $n\in\mathbb{Z}$ and $x>0$, where $K_\nu(x)$ is the modified Bessel function of the second kind (see \cite[Chapter 10]{olver}).

\section{Evaluating the integral}

Our formula for the integral is given in the following theorem. Here and elsewhere in this paper we will use the convention that the empty sum is set to zero.

\begin{theorem}\label{thm2} Suppose $a,b>0$ and $n\in\mathbb{Z}^+$. Then
\begin{align}
\int_0^\infty\frac{\sin(ax)}{(b^2+x^2)^n}\,\mathrm{d}x&=\frac{1}{b^{2n-1}}\sum_{k=1}^n\binom{2n-k-1}{n-1}\frac{2^{k-2n}}{(k-1)!}\nonumber\\
&\quad\times\bigg\{(ab)^{k-1}\big[\mathrm{e}^{-ab}\mathrm{Ei}(ab)+(-1)^k\mathrm{e}^{ab}\mathrm{Ei}(-ab)\big]\nonumber\\
&\quad-\sum_{j=1}^{k-1}
(j-1)!\big[1+(-1)^{k+j}\big](ab)^{k-j-1}\bigg\}.\label{one}
\end{align}  
\end{theorem}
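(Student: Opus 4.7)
The plan is to decompose $1/(b^2+x^2)^n=1/[(x-ib)^n(x+ib)^n]$ into partial fractions, reduce each resulting piece to the $n=1$ case (formula (1.3)) by differentiating in $b$, and simplify the $b$-derivatives using the first-order ODEs satisfied by $u(b):=\mathrm{e}^{-ab}\mathrm{Ei}(ab)$ and $v(b):=\mathrm{e}^{ab}\mathrm{Ei}(-ab)$.

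First I would establish the partial-fraction identity
\begin{equation*}
\frac{1}{(b^2+x^2)^n}=\sum_{k=1}^n\binom{2n-k-1}{n-1}\frac{(-1)^{n-k}}{(2ib)^{2n-k}}\bigg[\frac{1}{(x-ib)^k}+\frac{(-1)^k}{(x+ib)^k}\bigg]
\end{equation*}
by computing the residues of the two $n$-fold poles at $\pm ib$ via Leibniz's rule; the coefficient $\binom{2n-k-1}{n-1}$ pops out and already matches the binomial factor in the target formula, which is reassuring. The task reduces to evaluating
\begin{equation*}
M_k:=\int_0^\infty\sin(ax)\bigg[\frac{1}{(x-ib)^k}+\frac{(-1)^k}{(x+ib)^k}\bigg]\mathrm{d}x,\qquad 1\leq k\leq n.
\end{equation*}
For $k=1$, rationalising $1/(x\mp ib)=(x\pm ib)/(x^2+b^2)$ splits the integral into the standard $\int_0^\infty x\sin(ax)/(x^2+b^2)\,\mathrm{d}x=\frac{\pi}{2}\mathrm{e}^{-ab}$ and formula (1.3), giving $\int_0^\infty\sin(ax)/(x\mp ib)\,\mathrm{d}x=\frac{\pi}{2}\mathrm{e}^{-ab}\pm\frac{i}{2}(u-v)$ and hence $M_1=i(u-v)$.

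For $k\geq 2$ I would use $\frac{\mathrm{d}^{k-1}}{\mathrm{d}b^{k-1}}(x\mp ib)^{-1}=(\pm i)^{k-1}(k-1)!(x\mp ib)^{-k}$ to obtain each integral as a $(k-1)$th $b$-derivative of the base case, justifying differentiation under the integral sign by a routine dominated-convergence argument near any fixed $b>0$. After combining the two pieces of $M_k$ the real $\frac{\pi}{2}\mathrm{e}^{-ab}$ contributions cancel (because $(-i)^{k-1}+(-1)^k i^{k-1}=0$), leaving $M_k=\frac{(-1)^{k-1}i^k}{(k-1)!}[u^{(k-1)}(b)-v^{(k-1)}(b)]$. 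From $\mathrm{Ei}'(x)=\mathrm{e}^x/x$ one reads off the ODEs $u'=-au+1/b$ and $v'=av+1/b$, and a one-line induction produces $u^{(m)}=(-a)^m u+(-1)^{m-1}\sum_{j=1}^m(j-1)!\,a^{m-j}b^{-j}$ with a sign-flipped companion for $v$. Substituting these into $M_k$ and into the partial-fraction identity, the scalar prefactor collapses (using $i^{2k-2n}=(-1)^{k-n}$) to $(-1)^{k-1}2^{k-2n}/[(k-1)!\,b^{2n-k}]$; absorbing the final $(-1)^{k-1}$ into the bracket turns the leading contribution into $a^{k-1}[u+(-1)^k v]$ and converts the polynomial tails into $-(j-1)![1+(-1)^{k+j}]\,a^{k-1-j}b^{-j}$. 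Pulling out the common factor via $a^{k-1}/b^{2n-k}=(ab)^{k-1}/b^{2n-1}$ and $a^{k-1-j}/b^{2n-k+j}=(ab)^{k-1-j}/b^{2n-1}$ then reproduces (2.1).

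The hardest part will be the sign/parity bookkeeping, especially deriving the closed-form polynomial tails of $u^{(m)}$, $v^{(m)}$ with the correct parities and verifying that their difference assembles into the advertised $[1+(-1)^{k+j}]$ factor; however, no deeper input than the linear ODEs for $u$, $v$ and elementary partial-fraction manipulations appears to be needed.
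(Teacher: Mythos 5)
Your proposal is correct, and it rests on the same backbone as the paper's proof --- the partial fraction decomposition of $1/(b^2+x^2)^n$ with coefficients $\binom{2n-k-1}{n-1}$ (your decomposition agrees with the paper's equation (\ref{par}) after rescaling) --- but the two arguments diverge in how the higher-order pole integrals $\int_0^\infty \sin(ax)(x\mp \mathrm{i}b)^{-k}\,\mathrm{d}x$ are evaluated. The paper writes $\sin(ax)=(\mathrm{e}^{\mathrm{i}ax}-\mathrm{e}^{-\mathrm{i}ax})/(2\mathrm{i})$ and invokes the tabulated formula $\int_0^\infty \mathrm{e}^{-\mu x}(x+b)^{-n}\,\mathrm{d}x$ in terms of $\mathrm{Ei}$, applied with purely imaginary $\mu$ and $b$; this is quick but requires the separate observation that the tabulated conditions ($\mathrm{Re}(\mu)>0$, $|\arg(b)|<\pi$) extend to the imaginary case actually needed. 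You instead stay with real base integrals ($\int_0^\infty x\sin(ax)/(x^2+b^2)\,\mathrm{d}x=\tfrac{\pi}{2}\mathrm{e}^{-ab}$ and formula (\ref{bom})), reduce $k\ge 2$ to $k=1$ by differentiating in $b$, and push the derivatives through the ODEs $u'=-au+1/b$, $v'=av+1/b$; I have checked your identities ($M_1=\mathrm{i}(u-v)$, the cancellation of the $\tfrac{\pi}{2}\mathrm{e}^{-ab}$ terms, the closed forms for $u^{(m)},v^{(m)}$, and the collapse of the prefactor to $(-1)^{k-1}2^{k-2n}/[(k-1)!\,b^{2n-k}]$) and they all assemble correctly into (\ref{one}). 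The one point to state more carefully is the justification of differentiation under the integral sign at the first step: for $k=1$ the integrand decays only like $1/x$, so the integral is merely conditionally convergent and a plain dominated-convergence argument does not apply directly; the standard fix is to work with the truncations $F_R(b)=\int_0^R\sin(ax)(x\mp\mathrm{i}b)^{-1}\,\mathrm{d}x$, note that $F_R\to F$ pointwise while $F_R'$ converges uniformly on compact subsets of $(0,\infty)$ (the differentiated integrand is dominated by $(x^2+b^2)^{-1}$), and conclude $F'=\lim F_R'$. With that adjustment your argument is complete, and arguably buys something the paper's does not: it avoids complex-parameter special-function identities entirely, at the cost of slightly more bookkeeping in the derivative formulas for $u$ and $v$ (whose polynomial tails carry $j$-dependent signs, not a single overall sign flip --- your $[1+(-1)^{k+j}]$ factor does emerge correctly from their difference).
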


\begin{proof} We will prove the result for the case $b=1$; the general case $b>0$ follows by a simple change of variables. We may also restrict to the case $a\in\mathbb{R}\setminus\{0\}$, as formula (\ref{one}) clearly holds for $a=0$.
We begin by noting the following partial fraction decomposition for the rational function $1/(1+x^2)^n$ over the complex numbers, which follows almost immediately from the partial fraction decomposition of $1/(1-x^2)^n$, which is given in Theorem 1 of \cite{v02}. For $x\in\mathbb{R}$ and $n\in\mathbb{Z}^+$ we have the partial fraction decomposition
\begin{align}\label{par}
\frac{1}{(1+x^2)^n}=\sum_{k=1}^n A_k\bigg[\frac{1}{(x+\mathrm{i})^k}+\frac{(-1)^k}{(x-\mathrm{i})^k}\bigg],    
\end{align}
where $A_k=\binom{2n-k-1}{n-1}\mathrm{i}^k/2^{2n-k}$.
With the partial fraction decomposition (\ref{par}) and the standard formula $\sin(x)=(\mathrm{e}^{\mathrm{i}x}-\mathrm{e}^{-\mathrm{i}x})/(2\mathrm{i})$ we can write
\begin{align}
\int_0^\infty\frac{\sin(ax)}{(1+x^2)^n}\,\mathrm{d}x&=\frac{1}{2\mathrm{i}}\sum_{k=1}^n A_k\bigg\{\int_0^\infty\frac{\mathrm{e}^{\mathrm{i}ax}}{(x+\mathrm{i})^k}\,\mathrm{d}x-\int_0^\infty\frac{\mathrm{e}^{-\mathrm{i}ax}}{(x+\mathrm{i})^k}\,\mathrm{d}x\nonumber\\
&\quad+\int_0^\infty\frac{(-1)^k\mathrm{e}^{\mathrm{i}ax}}{(x-\mathrm{i})^k}\,\mathrm{d}x-\int_0^\infty\frac{(-1)^k\mathrm{e}^{-\mathrm{i}ax}}{(x-\mathrm{i})^k}\,\mathrm{d}x\bigg\}. \label{four}  
\end{align}
We now note the following definite integral formula
\begin{align}\label{five}
\int_0^\infty \frac{\mathrm{e}^{-\mu x}}{(x+b)^n}\,\mathrm{d}x=\frac{1}{(n-1)!}\sum_{k=1}^{n-1} (k-1)!(-\mu)^{n-k-1}b^{-k}-\frac{(-\mu)^{n-1}}{(n-1)!}\mathrm{e}^{b\mu}\mathrm{Ei}(-b\mu), 
\end{align}
which is easily derived by repeated integration by parts. Formula (\ref{five}) is stated under the conditions $n\in\mathbb{Z}^+$, $|\mathrm{arg}(b)|<\pi$, $\mathrm{Re}(\mu)>0$ in formulas 3.352(4), 3.353(2) and 3.353(3) of \cite{g07}, and the formula is also valid under the more specific condition that $n\in\mathbb{Z}^+$, $b=c\mathrm{i}$, $\mu=d\mathrm{i}$ for some $c,d\in\mathbb{R}\setminus\{0\}$, which is the case relevant to us. Applying the integral formula (\ref{five}) to equation (\ref{four}) and simplifying yields the formula
\begin{align}
\int_0^\infty\frac{\sin(ax)}{(1+x^2)^n}\,\mathrm{d}x&=\frac{1}{2\mathrm{i}}\sum_{k=1}^n \frac{A_k}{(k-1)!}\bigg\{\sum_{j=1}^{k-1}(j-1)!(\mathrm{i}a)^{k-j-1}\mathrm{i}^{-j}-(\mathrm{i}a)^{k-1}\mathrm{e}^{a}\mathrm{Ei}(-a)\nonumber\\
&\quad-\sum_{j=1}^{k-1}(j-1)!(-\mathrm{i}a)^{k-j-1}\mathrm{i}^{-j}-(-\mathrm{i}a)^{k-1}\mathrm{e}^{-a}\mathrm{Ei}(a)\nonumber\\
&\quad+(-1)^k\sum_{j=1}^{k-1}(j-1)!(\mathrm{i}a)^{k-j-1}(-\mathrm{i})^{-j}-(-1)^k(\mathrm{i}a)^{k-1}\mathrm{e}^{-a}\mathrm{Ei}(a)\nonumber\\
&\quad-(-1)^k\sum_{j=1}^{k-1}(j-1)!(-\mathrm{i}a)^{k-j-1}(-\mathrm{i})^{-j}-(-1)^k(-\mathrm{i}a)^{k-1}\mathrm{e}^{a}\mathrm{Ei}(-a)\bigg\}\nonumber\\
&=\sum_{k=1}^n\binom{2n-k-1}{n-1}\frac{2^{k-2n}}{(k-1)!}\bigg\{a^{k-1}\big[\mathrm{e}^{-a}\mathrm{Ei}(a)+(-1)^k\mathrm{e}^a\mathrm{Ei}(-a)\big] \nonumber\\
&\quad+\sum_{j=1}^{k-1}(-1)^{k+j+1}(j-1)!\big[1+(-1)^{k+j}\big]a^{k-j-1}\bigg\}.\label{two}
\end{align} 
The desired formula (\ref{one}) now follows from (\ref{two}) by making the substitution $x=y/b$ in the integral, and a final simplification that $(-1)^{k+j+1}[1+(-1)^{k+j}]=-[1+(-1)^{k+j}]$ for $j,k\in\mathbb{Z}^+$.
\end{proof}

\begin{example}In the cases $n=2,3,4$, formula (\ref{one}) yields the following expressions:
\begin{align*}
\int_0^\infty\frac{\sin(ax)}{(b^2+x^2)^2}\,\mathrm{d}x&=\frac{1}{4b^3}\big[(1+ab)\mathrm{e}^{-ab}\mathrm{Ei}(ab)-(1-ab)\mathrm{e}^{ab}\mathrm{Ei}(-ab)\big], \\ 
\int_0^\infty\frac{\sin(ax)}{(b^2+x^2)^3}\,\mathrm{d}x&=\frac{1}{16b^5}\Big[\big(3+3ab+(ab)^2\big)\mathrm{e}^{-ab}\mathrm{Ei}(ab)-\big(3-3ab+(ab)^2\big)\mathrm{e}^{ab}\mathrm{Ei}(-ab)-2ab\Big], \\
\int_0^\infty\frac{\sin(ax)}{(b^2+x^2)^4}\,\mathrm{d}x&=\frac{1}{96b^7}\Big[\big(15+15ab+6(ab)^2+(ab)^3\big)\mathrm{e}^{-ab}\mathrm{Ei}(ab)\\
&\quad\quad\quad\quad-\big(15-15ab+6(ab)^2-(ab)^3\big)\mathrm{e}^{ab}\mathrm{Ei}(-ab)-14ab\Big].
\end{align*}
\end{example}

\begin{corollary}\label{cor}Let $x>0$ and $n\in\mathbb{Z}^+$. Then
\begin{align}
\lim_{\nu\rightarrow n} \frac{I_{\nu-1/2}(x)-\mathbf{L}_{1/2-\nu}(x)}{\sin(\pi\nu)}&=\frac{2}{\pi^{3/2}}(n-1)!\bigg(\frac{2}{x}\bigg)^{n-1/2}\sum_{k=1}^n\binom{2n-k-1}{n-1}\frac{2^{k-2n}}{(k-1)!}\nonumber\\
&\quad\times\bigg\{x^{k-1}\big[\mathrm{e}^{-x}\mathrm{Ei}(x)+(-1)^k\mathrm{e}^{x}\mathrm{Ei}(-x)\big]\nonumber\\
&\quad-\sum_{j=1}^{k-1}(j-1)!\big[1+(-1)^{k+j}\big]x^{k-j-1}\bigg\}.\label{limit}
\end{align}
\end{corollary}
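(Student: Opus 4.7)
My plan is to derive the corollary by combining the integral formula (\ref{yy}) of \cite{olver,integralbook} with the explicit evaluation given by Theorem \ref{thm2}, and then passing to the limit $\nu\to n$. The starting point is to specialize (\ref{yy}) to $a=x$, $b=1$, $\rho=\nu$ and apply Euler's reflection formula $\Gamma(1-\nu)\Gamma(\nu)=\pi/\sin(\pi\nu)$ to rewrite it in the form
\begin{equation*}
\frac{I_{\nu-1/2}(x)-\mathbf{L}_{1/2-\nu}(x)}{\sin(\pi\nu)}=\frac{2\Gamma(\nu)}{\pi^{3/2}}\bigg(\frac{2}{x}\bigg)^{\nu-1/2}\int_0^\infty\frac{\sin(xt)}{(1+t^2)^\nu}\,\mathrm{d}t, \quad \nu\in(0,\infty)\setminus\mathbb{Z}^+.
\end{equation*}
This is the key identity: it absorbs the offending factor $\Gamma(1-\nu)$ into the ratio $\Gamma(\nu)/\sin(\pi\nu)$, after which both sides are in a form amenable to taking $\nu\to n$.

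Next, I would take $\nu\to n$ on both sides. On the left, $I_{\nu-1/2}(x)$ and $\mathbf{L}_{1/2-\nu}(x)$ are entire in $\nu$ for fixed $x>0$ and coincide at $\nu=n$ by the identity $\mathbf{L}_{1/2-n}(x)=I_{n-1/2}(x)$ from equation 11.4.4 of \cite{olver}, so the numerator has a zero at $\nu=n$ matching the simple zero of $\sin(\pi\nu)$; hence the limit exists. On the right, $\Gamma(\nu)\to(n-1)!$, $(2/x)^{\nu-1/2}\to(2/x)^{n-1/2}$, and the integral tends to $\int_0^\infty\sin(xt)/(1+t^2)^n\,\mathrm{d}t$. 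Substituting the explicit evaluation of this integral supplied by Theorem \ref{thm2} (with $a$ replaced by $x$ and $b=1$) then yields (\ref{limit}) directly.

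The main technical obstacle is to justify continuity in $\nu$ at $\nu=n$ of the integral on the right, since for $n=1$ the crude bound $|\sin(xt)/(1+t^2)^\nu|\le(1+t^2)^{-1/2}$ is not integrable on $[0,\infty)$. A clean remedy is a single integration by parts,
\begin{equation*}
\int_0^\infty\frac{\sin(xt)}{(1+t^2)^\nu}\,\mathrm{d}t=\frac{1}{x}-\frac{2\nu}{x}\int_0^\infty\frac{t\cos(xt)}{(1+t^2)^{\nu+1}}\,\mathrm{d}t,
\end{equation*}
after which the integrand is absolutely integrable and uniformly dominated by an $L^1$ function for $\nu$ in a compact neighborhood of $n$, so that the dominated convergence theorem supplies the required continuity and the argument closes.
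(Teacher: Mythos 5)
Your proposal is correct and follows essentially the same route as the paper: rewrite (\ref{yy}) using the reflection formula to absorb $\Gamma(1-\nu)$ into $\Gamma(\nu)/\sin(\pi\nu)$, let $\nu\to n$, and evaluate the resulting integral with Theorem \ref{thm2}. Your integration-by-parts argument justifying continuity of the integral in $\nu$ is a valid extra detail that the paper passes over silently.
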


\begin{proof}From the relation $\Gamma(x)\Gamma(1-x)=\pi/\sin(\pi x)$ (see \cite[equation 5.5.3]{olver}) and the integral formula (\ref{yy}) we obtain that, for $x>0$ and $\nu>0$,
\begin{align*}
\frac{I_{\nu-1/2}(x)-\mathbf{L}_{1/2-\nu}(x)}{\sin(\pi\nu)}=\frac{\Gamma(\nu)}{\pi}\cdot \frac{2}{\sqrt{\pi}}\bigg(\frac{2}{x}\bigg)^{\nu-1/2}\int_0^\infty\frac{\sin(xt)}{(1+t^2)^\nu}\,\mathrm{d}t,   
\end{align*}
so that, for $n\in\mathbb{Z}^+$, 
\begin{align*}
\lim_{\nu\rightarrow n}\frac{I_{\nu-1/2}(x)-\mathbf{L}_{1/2-\nu}(x)}{\sin(\pi\nu)}=\frac{2}{\pi^{3/2}}(n-1)!\bigg(\frac{2}{x}\bigg)^{n-1/2}\int_0^\infty\frac{\sin(xt)}{(1+t^2)^n}\,\mathrm{d}t.  
\end{align*}
Evaluating the integral using formula (\ref{one}) now yields the limit (\ref{limit}).
\end{proof}

\section{The characteristic function of the AST distribution}

Our formula for the CF of the AST distribution is given in the following theorem. 
We let $\mathrm{sgn}(x)$ denote the sign function, which is given by $\mathrm{sgn}(x)=-1$ for $x<0$, $\mathrm{sgn}(0)=0$, $\mathrm{sgn}(x)=1$ for $x>0$.


\begin{theorem}\label{thm1} Let $X$ be a random variable with PDF (\ref{pdf}), with $0<\alpha<1$ and $\nu_1,\nu_2>0$. Denote its CF by $\phi_X(t)=\mathbb{E}[\mathrm{e}^{\mathrm{i}tX}]$. Then, for $t\in\mathbb{R}$,
\begin{align}\label{gen}
\phi_X(t)=\mathcal{A}_{\alpha,\nu_1}(t)+ \mathcal{A}_{1-\alpha,\nu_2}(t) +\mathrm{i}\{\mathcal{B}_{1-\alpha,\nu_2}(t)-\mathcal{B}_{\alpha,\nu_1}(t)\},
\end{align}
where
\begin{align}
\mathcal{A}_{\alpha,\nu}(t)&=\frac{2\alpha(\alpha^*\sqrt{\nu}|t|)^{\nu/2}K_{\nu/2}(2\alpha^*\sqrt{\nu}|t|)}{\Gamma(\nu/2)}, \quad\nu>0, \nonumber\\
\mathcal{B}_{\alpha,\nu}(t)&=\frac{\pi\alpha(\alpha^*\sqrt{\nu}|t|)^{\nu/2}\mathrm{sgn}(t)}{\cos(\pi\nu/2)\Gamma(\nu/2)}\big\{I_{\nu/2}(2\alpha^*\sqrt{\nu}|t|)-\mathbf{L}_{-\nu/2}(2\alpha^*\sqrt{\nu}|t|)\big\}, \:\,\nu\in(0,\infty)\setminus\{1,3,5,\ldots\}, \label{mar1} \\
\mathcal{B}_{\alpha,\nu}(t)&=\frac{2\alpha}{\sqrt{\pi}}\frac{\Gamma(\nu+1/2)}{\Gamma(\nu/2)}\mathrm{sgn}(t)\sum_{k=1}^{(\nu+1)/2}\binom{\nu-k}{(\nu-1)/2}\frac{2^{k-\nu-1}}{(k-1)!}\nonumber\\
&\quad\times\bigg\{(2\alpha^*\sqrt{\nu}|t|)^{k-1}\big[\mathrm{e}^{-2\alpha^*\sqrt{\nu}|t|}\mathrm{Ei}(2\alpha^*\sqrt{\nu}|t|)+(-1)^k\mathrm{e}^{2\alpha^*\sqrt{\nu}|t|}\mathrm{Ei}(-2\alpha^*\sqrt{\nu}|t|)\big]\nonumber\\
&\quad-\sum_{j=1}^{k-1}
(j-1)!\big[1+(-1)^{k+j}\big](2\alpha^*\sqrt{\nu}|t|)^{k-j-1}\bigg\}, \quad\nu\in\{1,3,5,\ldots\}. \label{mar2}
\end{align}
\end{theorem}

\begin{remark}
The expression for $\mathcal{B}_{\alpha,\nu}(t)$ given in formula (\ref{mar1}) is undefined for $\nu\in\{1,3,5,\ldots\}$ (due to the factor $\cos(\pi\nu/2)$ in the denominator). It is for this reason that an alternative expression is needed for the case $\nu\in\{1,3,5,\ldots\}$, which is provided by formula (\ref{mar2}).   
\end{remark}

\begin{proof} We prove the result for the case $t>0$; the case $t<0$ is similar and hence omitted. By Euler's formula, we can write $\phi_X(t)=\mathbb{E}[\mathrm{e}^{\mathrm{i}tX}]=\mathbb{E}[\cos(tX)]+\mathrm{i}\mathbb{E}[\sin(tX)]$, so that, on using that $\sin(x)$ is an odd function in $x$ and that $\cos(x)$ and $(1+cx^2)^{-\rho}$ are even functions in $x$, we see that the CF of $X$ can be expressed as
\begin{align*}
\phi_X(t)&=\mathcal{I}_{\alpha,\nu_1}(t)+ \mathcal{I}_{1-\alpha,\nu_2}(t) +\mathrm{i}\{\mathcal{J}_{1-\alpha,\nu_2}(t)-\mathcal{J}_{\alpha,\nu_1}(t)\},   
\end{align*}
where
\begin{align}
\mathcal{I}_{\alpha,\nu}(t)&=\int_0^\infty \frac{\alpha}{\alpha^*}K(\nu)\bigg[1+\frac{1}{\nu}\Big(\frac{x}{2\alpha^*}\Big)^2\bigg]^{-(\nu+1)/2}\cos(tx)\,\mathrm{d}x \nonumber \\  
&=2\alpha\sqrt{\nu} K(\nu)\int_0^\infty (1+y^2)^{-(\nu+1)/2}\cos\big(2\alpha^*\sqrt{\nu}ty\big)\,\mathrm{d}y \label{ieqn}
\end{align}
and
\begin{align}
\mathcal{J}_{\alpha,\nu}(t)&=\int_0^\infty \frac{\alpha}{\alpha^*}K(\nu)\bigg[1+\frac{1}{\nu}\Big(\frac{x}{2\alpha^*}\Big)^2\bigg]^{-(\nu+1)/2}\sin(tx)\,\mathrm{d}x\nonumber\\
&=2\alpha\sqrt{\nu} K(\nu)\int_0^\infty (1+y^2)^{-(\nu+1)/2}\sin\big(2\alpha^*\sqrt{\nu}ty\big)\,\mathrm{d}y.\label{jeqn}
\end{align}

For the integral $\mathcal{I}_{\alpha,\nu}(t)$, we note that from the integral formula (\ref{bas}) we have
the definite integral formula
\begin{equation}\label{xx}
\int_0^\infty (1+u^2)^{-(v+1/2)}\cos(xu)\,\mathrm{d}u=\frac{\sqrt{\pi}x^vK_v(x)}{2^v\Gamma(v+1/2)}, \quad x>0,\,v>-1/2.    
\end{equation}
Evaluating the integral in (\ref{ieqn}) using the integral formula (\ref{xx}) reveals that $\mathcal{I}_{\alpha,\nu}(t)=\mathcal{A}_{\alpha,\nu}(t)$.

In the case $\nu\in(0,\infty)\setminus\{1,3,5,\ldots\}$, evaluating the integral in (\ref{jeqn}) using the integral formula (\ref{yy}) together with the formula $\Gamma((\nu+1)/2)\Gamma((1-\nu)/2)=\pi/\cos(\pi\nu/2)$ (an easy consequence of equation 5.5.3 of \cite{olver}), confirms that $\mathcal{J}_{\alpha,\nu}(t)=\mathcal{B}_{\alpha,\nu}(t)$ for $\nu\in(0,\infty)\setminus\{1,3,5,\ldots\}$. For the case $\nu\in\{1,3,5,\ldots\}$, we evaluate the integral in (\ref{jeqn}) using the integral formula (\ref{one}), which confirms that $\mathcal{J}_{\alpha,\nu}(t)=\mathcal{B}_{\alpha,\nu}(t)$ for $\nu\in\{1,3,5,\ldots\}$.
This completes the proof of formula (\ref{gen}).    
\end{proof}


\begin{remark}\label{rem1} 
Setting $\alpha=1/2$ and $\nu_1=\nu_2=\nu$ in formula (\ref{gen}) yields the formula
\begin{equation*}
\phi_X(t)=2\mathcal{A}_{1/2,\nu}(t)=\frac{(\sqrt{\nu}|t|)^{\nu/2}K_{\nu/2}(\sqrt{\nu}|t|)}{2^{\nu/2-1}\Gamma(\nu/2)},   \quad t\in\mathbb{R}, 
\end{equation*} 
which we recognise as the known formula for the CF of Student's $t$-distribution with $\nu$ degrees of freedom (see, for example, \cite{gaunt}).
\end{remark}

    


A simple generalisation of the AST distribution that incorporates a location parameter $\mu\in\mathbb{R}$ and scale parameter $\sigma>0$ was also introduced by 
\cite{zg10}, for which the PDF is given by
\begin{align}\label{nine}
f(y)=\begin{cases}
\displaystyle \frac{1}{\sigma}\bigg[1+\frac{1}{\nu_1}\Big(\frac{y-\mu}{2\alpha\sigma K(\nu_1)}\Big)^2\bigg]^{-(\nu_1+1)/2}, &\: y\leq\mu, \\[10pt]
\displaystyle \frac{1}{\sigma}\bigg[1+\frac{1}{\nu_2}\Big(\frac{y-\mu}{2(1-\alpha)\sigma K(\nu_2)}\Big)^2\bigg]^{-(\nu_2+1)/2}, & \: y>\mu.
\end{cases}    
\end{align}
In the following corollary, we present a formula for the CF of this distribution, which follows easily from Theorem \ref{thm1}.

\begin{corollary}Let $Y$ be a random variable with PDF (\ref{nine}), with $0<\alpha<1$, $\nu_1,\nu_2>0$, $\mu\in\mathbb{R}$ and $\sigma>0$. Denote its CF by $\phi_Y(t)=\mathbb{E}[\mathrm{e}^{\mathrm{i}tY}]$. Then, for $t\in\mathbb{R}$,
\begin{align}\label{gen0}
\phi_Y(t)=\mathrm{e}^{\mathrm{i}\mu t}\big(\mathcal{A}_{\alpha,\nu_1}(\sigma Bt)+ \mathcal{A}_{1-\alpha,\nu_2}(\sigma Bt) +\mathrm{i}\{\mathcal{B}_{1-\alpha,\nu_2}(\sigma Bt)-\mathcal{B}_{\alpha,\nu_1}(\sigma Bt)\}\big),
\end{align}
where $B=\alpha K(\nu_1)+(1-\alpha)K(\nu_2)$, and $\mathcal{A}_{\alpha,\nu}(t)$ and $\mathcal{B}_{\alpha,\nu}(t)$ are defined as in Theorem \ref{thm1}.    
\end{corollary}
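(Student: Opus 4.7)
The plan is to reduce the corollary directly to Theorem \ref{thm1} by recognising $Y$ as an affine transformation of a random variable with the base AST density (\ref{pdf}). Concretely, I would set $B=\alpha K(\nu_1)+(1-\alpha)K(\nu_2)$ and check that if $X$ has PDF (\ref{pdf}) with parameters $\alpha,\nu_1,\nu_2$, then $Y\stackrel{d}{=}\sigma B X+\mu$. Once this is established, standard properties of the CF give
\begin{equation*}
\phi_Y(t)=\mathbb{E}[\mathrm{e}^{\mathrm{i}t(\sigma BX+\mu)}]=\mathrm{e}^{\mathrm{i}\mu t}\phi_X(\sigma Bt),
\end{equation*}
and substituting the expression for $\phi_X$ from Theorem \ref{thm1} at the argument $\sigma Bt$ yields exactly (\ref{gen0}).

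The verification of the distributional identity is the only piece of real calculation, and it is routine. First I would record the identities $\alpha^*=\alpha K(\nu_1)/B$ and $1-\alpha^*=(1-\alpha)K(\nu_2)/B$, which follow directly from the definition of $\alpha^*$ in Section 1; these imply $\alpha^*B=\alpha K(\nu_1)$ and $(1-\alpha^*)B=(1-\alpha)K(\nu_2)$, as well as $\alpha/\alpha^*=B/K(\nu_1)$ and $(1-\alpha)/(1-\alpha^*)=B/K(\nu_2)$. Applying the change-of-variables formula $f_Y(y)=(\sigma B)^{-1}f_X((y-\mu)/(\sigma B))$ and using these relations, the prefactor $\alpha K(\nu_1)/(\alpha^*\sigma B)$ collapses to $1/\sigma$ on the branch $y\le\mu$, and the denominator in the bracket, $2\alpha^*\sigma B$, collapses to $2\alpha\sigma K(\nu_1)$; this matches the top line of (\ref{nine}) exactly. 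An analogous simplification on the branch $y>\mu$ recovers the bottom line of (\ref{nine}).

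There is no real obstacle here: the argument is a transparent location-scale reduction, and the only mild bookkeeping is keeping track of the $\alpha^*$ versus $\alpha K(\nu_1)/B$ rewriting so that the two branches of (\ref{nine}) emerge cleanly. Once that is done, invoking Theorem \ref{thm1} finishes the proof in one line.
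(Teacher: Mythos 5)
Your proposal is correct and follows essentially the same route as the paper: both establish the distributional identity $Y=_d\mu+\sigma BX$ via the relations $\frac{\alpha}{\alpha^*}K(\nu_1)=\frac{1-\alpha}{1-\alpha^*}K(\nu_2)=B$ (which the paper simply cites from Zhu and Galbraith, while you verify them by a change of variables) and then apply $\phi_Y(t)=\mathrm{e}^{\mathrm{i}\mu t}\phi_X(\sigma Bt)$ together with Theorem \ref{thm1}.
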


\begin{proof} We begin by noting the following series of equalities that were given by \cite[equation (3)]{zg10}:
\begin{align*}
\frac{\alpha}{\alpha^*}K(\nu_1)=\frac{1-\alpha}{1-\alpha^*}K(\nu_2)=\alpha K(\nu_1)+(1-\alpha)K(\nu_2)=B.    
\end{align*}
From these equalities we can infer the distributional relation $Y=_d \mu+\sigma BX$, where $X$ is a random variable with PDF (\ref{pdf}) and $=_d$ denotes equality in distribution. Formula (\ref{gen0}) now follows from the relation $\phi_Y(t)=\mathrm{e}^{\mathrm{i}\mu t}\phi_X(\sigma B t)$ and formula (\ref{gen}) for $\phi_X(t)$.
\end{proof}

\section*{Acknowledgements}
The author was funded in part by EPSRC grant EP/Y008650/1 and EPSRC grant UKRI068. I would like to thank the reviewers for their helpful comment and suggestions.

\footnotesize

\end{document}